\definecolor{theoremback}{rgb}{0.8235294, 0.8627451, 0.9137255}
\definecolor{exampletitle}{rgb}{0.6705882, 0.7568627, 0.8705882}
\newtheoremstyle{nthm}
{3pt}
{3pt}
{\itshape}%body font
{0em}%indent amount
{\bfseries}%theorem head font
{.}%puncuation after theorem head
{.5em}%space after theorem head
{}%theoremheadspec who knows/??
\newtheoremstyle{ndef}
{3pt}
{3pt}
{}%body font
{0em}%indent amount
{\bfseries}%theorem head font
{.}%puncuation after theorem head
{.5em}%space after theorem head
{}%theo
\newtheoremstyle{nrem}
{3pt}
{3pt}
{}%body font
{0em}%indent amount
{\itshape}%theorem head font
{.}%puncuation after theorem head
{.5em}%space after theorem head
{}%theor
\theoremstyle{nthm}
\newmdtheoremenv[linecolor=theoremback,linewidth=2,backgroundcolor=theoremback]{thm}[subsection]{Theorem}
\newtheorem{lem}[subsection]{Lemma}
\newtheorem{prop}[subsection]{Proposition}
\newtheorem{cor}[subsection]{Corollary}
\theoremstyle{ndef}
\theoremstyle{nrem}
\newtheorem{exmp}[subsection]{Example}
\newcommand{\Z}{\mathbb{Z}}
	\DeclareMathOperator{\Hom}{Hom}
\newcommand{\df}[1]{{\bfseries \emph{#1}}}
\newsavebox{\fmbox}
\DeclareMathOperator{\tr}{tr}
\DeclareMathOperator{\disc}{disc}
\newcommand{\theoremOne}{Let $p$ be a prime and $k \geq 1$ be an integer. The number of monic separable polynomials of degree $d$ with $d \geq 2$ in $\Z/p^k[x]$ is $p^{kd-1}(p-1) = \phi(p^{kd})$. Equivalently, the proportion of monic polynomials of degree $d$ with $d\geq 2$ that are separable in $\Z/p^k[x]$ is $\left( 1-p^{-1} \right)$.}
\newcommand{\theoremTwo}{Let $n$ be an integer with $|n| > 1$ and let $n = p_1^{k_1}\cdots p_m^{k_m}$ be the prime factorisation of $n$. Then, the number of monic separable polynomials of degree $d$ with $d\geq 2$ in the ring $\Z/n[x]$ is $\phi(n^d)$. Equivalently, the proportion of monic polynomials of degree $d$ with $d \geq 2$ that are separable in $\Z/n$ is equal to
    \begin{align*}
        \prod_{i=1}^m \left(1-p_i^{-1}\right).
    \end{align*}}
\newcommand{\theoremThree}{ Let $n$ be a positive integer with prime factorisation $n = p_1^{k_1}\cdots p_m^{k_m}$ and let $d\geq 1$. Then the number of separable polynomials $f$ with $\deg(f) \leq d$ in $\Z/n[x]$ is
    \begin{align*}
        \phi(n)n^d\prod_i(1+p_i^{-d}).
    \end{align*}}
\begin{document}
\title{ Counting Separable Polynomials in $\Z/n[x]$ }
\author{Jason K.C. Polak}
\thanks{This research was made possible by ARC Grant DP150103525.}
\date{\today}
\begin{abstract}
    For a commutative ring $R$, a polynomial $f\in R[x]$ is called separable if $R[x]/f$ is a separable $R$-algebra. We derive formulae for the number of separable polynomials when $R = \Z/n$, extending a result of L. Carlitz. For instance, we show that the number of separable polynomials in $\Z/n[x]$ that are separable is $\phi(n)n^d\prod_i(1-p_i^{-d})$ where $n = \prod p_i^{k_i}$ is the prime factorisation of $n$ and $\phi$ is Euler's totient function.
\end{abstract}
\subjclass[2010]{Primary: 16H05. Secondary: 13B25,13M10}
\address{School of Mathematics and Statistics\\The University of Melbourne\\Parkville, Victoria 3010\\Australia}
\email{jpolak@jpolak.org}
\keywords{Separable algebras, separable polynomials}
\maketitle

\section{Introduction}
Suppose $a,b,c$ are independently, uniformly randomly chosen elements of $\Z/11$. What is the probability that the element $a^{2}b^{2} - 4a^{3} c - 4b^{3} + 18abc - 27c^{2}$ is nonzero in $\Z/11$? Answer: $10/11$. This peculiar fact follows from a theorem of Carlitz~\cite[\S6]{Carlitz1932}, who proved that the number of monic separable polynomials in $\Z/p[x]$ of degree $d$ where $d\geq 2$ is $p^d - p^{d-1}$. Our aim is to extend his result to separable polynomials in $\Z/n[x]$. Now, most people are familiar with separable polynomials over fields, but just \emph{what is} a separable polynomial over an arbitrary commutative ring? To understand separable polynomials, we will have to first look at separable algebras.

Let $R$ be a commutative ring. If $A$ is an $R$-algebra, we define $A^{\rm op}$ to be the ring with the same underlying abelian group as $A$ and whose multiplication is given by $(a,b)\mapsto ba$. Then $A$ can be made into a left $A\otimes_R A^{\rm op}$-module via the action $(a\otimes a')b = aba'$. An $R$-algebra $A$ is called \df{separable} if $A$ is projective as an $A\otimes_R A^{\mathrm{op}}$-module, the basic theory of which is contained in \cite{AuslanderGoldm1960}. Examples include separable field extensions, full matrix rings over a commutative ring $R$, and group rings $k[G]$ when $k$ is a field and $G$ is a finite group whose order is invertible in $k$. On the other hand $
\Z[\sqrt{5}]$ is not a separable $\Z$-algebra.

A polynomial $f\in R[x]$ is called \df{separable} if $R[x]/f$ is separable as an $R$-algebra. A monic polynomial is separable if and only if the ideal $(f,f')$ is all of $R[x]$ \cite[\S1.4, Proposition 1.1]{Magid2014}, and so for fields coincides with the usual definition. For example, $x-a$ for $a\in R$ is always separable. To state our results, recall Euler's \df{totient function}: for a positive integer $n$, the number $\phi(n)$ is the number of elements of the set $\{1,2,\dots,n\}$ relatively prime to $n$. In other words, $\phi(n) = |(\Z/n)^\times|$. For example, $\phi(p^k) = p^k - p^{k-1}$. Our first theorem is on the number of monic separable polynomials in $\Z/p^k[x]$:

\begin{thm}\label{thm:mainthm}\theoremOne\end{thm}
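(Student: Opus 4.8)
The plan is to count monic separable polynomials in $\Z/p^k[x]$ of degree $d \geq 2$ directly, using the criterion that a monic $f$ is separable iff $(f,f') = (1)$ in $\Z/p^k[x]$, and to reduce this to a question over the field $\F_p = \Z/p$. First I would observe that $\Z/p^k[x]$ is a local-type situation: the ideal $(p)$ is nilpotent, so by Nakayama-type reasoning the ideal $(f,f')$ equals all of $\Z/p^k[x]$ if and only if its image $(\bar f, \bar f')$ equals all of $\F_p[x]$, i.e.\ if and only if $\bar f$ is separable (squarefree) in $\F_p[x]$. Here $\bar f$ denotes the reduction of $f$ modulo $p$; note $\overline{f'} = \bar f\,'$ since reduction is a ring homomorphism commuting with the derivation, and $\bar f$ is still monic of degree $d$.

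The second step is then purely combinatorial: the reduction map $\Z/p^k[x] \to \F_p[x]$ restricts to a surjection from monic degree-$d$ polynomials onto monic degree-$d$ polynomials, and every fiber has exactly $p^{k-1}$ elements in each of the $d$ non-leading coefficient slots, hence $(p^{k-1})^d = p^{(k-1)d}$ elements total. Therefore the number of monic separable polynomials of degree $d$ in $\Z/p^k[x]$ is $p^{(k-1)d}$ times the number of monic separable (squarefree) polynomials of degree $d$ in $\F_p[x]$. By Carlitz's result quoted in the introduction, the latter count is $p^d - p^{d-1} = p^{d-1}(p-1)$ for $d \geq 2$. Multiplying gives $p^{(k-1)d} \cdot p^{d-1}(p-1) = p^{kd-1}(p-1) = \phi(p^{kd})$, and dividing by the total number $p^{(k-1)d}\cdot p^d = p^{kd}$ of monic degree-$d$ polynomials in $\Z/p^k[x]$ gives the proportion $1 - p^{-1}$, as claimed.

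I would include a short self-contained proof of the reduction step rather than merely invoking Carlitz, since it is the conceptual heart. One direction is immediate: if $(f,f') = (1)$ in $\Z/p^k[x]$, applying the reduction homomorphism gives $(\bar f, \bar f') = (1)$ in $\F_p[x]$. For the converse, suppose $(\bar f, \bar f') = (1)$, so there exist $g,h \in \Z/p^k[x]$ with $fg + f'h \equiv 1 \pmod{p}$; write $fg + f'h = 1 - p\,r$ for some $r \in \Z/p^k[x]$. Since $p$ is nilpotent in $\Z/p^k$ (indeed $p^k = 0$), the element $1 - pr$ is a unit in $\Z/p^k[x]$ with inverse $1 + pr + p^2 r^2 + \cdots + p^{k-1}r^{k-1}$, so multiplying by this inverse shows $1 \in (f, f')$. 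Hence $(f,f') = (1)$ in $\Z/p^k[x]$, completing the equivalence.

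The main obstacle, such as it is, is not any single hard step but making sure the fiber-counting in step two is stated cleanly — in particular that separability depends only on the reduction mod $p$ and that the $p^{(k-1)d}$ lifts of a fixed monic $\bar f$ are in bijection with $(\Z/p^k \to \F_p\text{-kernel})^d$, i.e.\ with $(p\Z/p^k)^d$, which has cardinality $p^{(k-1)d}$. Everything else is bookkeeping plus the black box of Carlitz's field count. If one instead wanted a proof independent of Carlitz, one could count squarefree monic polynomials over $\F_p$ via the standard generating-function identity $\sum_f u^{\deg f} = \frac{1 - pu^2 \cdot (\text{power series})}{\cdots}$ arising from unique factorization, i.e.\ from $\prod_P (1 - u^{\deg P})^{-1} = (1-pu)^{-1}$; but since the introduction already grants Carlitz's theorem, I would simply cite it.
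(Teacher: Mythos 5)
Your proof is correct, and it reaches the same pivotal reduction as the paper --- a monic $f\in\Z/p^k[x]$ is separable if and only if its reduction mod $p$ is separable in $\Z/p[x]$, after which one counts fibers of size $p^{(k-1)d}$ and invokes Carlitz --- but it justifies that reduction by a genuinely different key lemma. The paper goes through Proposition~\ref{thm:sepdetect}: separability of $f$ is equivalent to $\disc(f)$ being a unit of $\Z/p^k$, and since $\disc(f)$ is a polynomial expression in the coefficients and a unit of $\Z/p^k$ is exactly an element not divisible by $p$, separability is visible mod $p$. You instead work from the criterion $(f,f')=(1)$ cited in the introduction and lift a B\'ezout identity through the nilpotent ideal $(p)$, inverting $1-pr$ by the finite geometric series $1+pr+\cdots+(pr)^{k-1}$. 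Your route is more elementary and self-contained --- it avoids the trace form and the dual-basis machinery entirely --- and it has the additional merit of making explicit the fiber-counting step ($p^{(k-1)d}$ monic lifts of each monic $\bar f$, hence $p^{(k-1)d}\cdot p^{d-1}(p-1)=\phi(p^{kd})$), which the paper compresses into the single sentence ``we have reduced the problem to Carlitz's theorem.'' What the paper's discriminant approach buys in exchange is reusability: the same Proposition~\ref{thm:sepdetect} is what powers the opening example about $a^2b^2-4a^3c-4b^3+18abc-27c^2$, so the author gets the explicit discriminant formulas essentially for free. Your closing aside about the generating-function count of squarefree polynomials over $\F_p$ is stated too loosely to stand as written (the identity is $\sum_{f\ \mathrm{squarefree}}u^{\deg f}=(1-pu^2)/(1-pu)$), but since you explicitly defer to Carlitz there, nothing in the actual argument depends on it.
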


When $k=1$ this result is Carlitz's theorem. For example, when $k = 1$ and $d= 2$, this result is easily computable. There are $p^2$ monic quadratic polynomials. Since $\Z/p$ is a perfect field, every irreducible quadratic is separable. Therefore, the only quadratic polynomials that are not separable are of the form $(x-a)^2$ for $a\in \Z/p$. Therefore, there are $p^2 - p$ separable quadratics. We note that in general, one must be careful with factorisation when $k > 1$ since then $\Z/p^k[x]$ is not a unique factorisation domain: for example, in $\Z/4[x]$, we have $x^2 = (x + 2)^2$.

\begin{exmp}If $k = 1$, then $\Z/p^k = \Z/p$ is a field, and every irreducible polynomial is also separable. This is not true if $k > 1$. For example, in $\Z/4[x]$, the polynomial $x^2 + 1$ is irreducible, but not separable. On the other hand, $x^2 + x + 1$ is separable and irreducible.\end{exmp}

Next, we consider the general case of $\Z/n$.

\begin{thm}\theoremTwo
\end{thm}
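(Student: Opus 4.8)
The plan is to bootstrap from Theorem~\ref{thm:mainthm} via the Chinese Remainder Theorem. First I would observe that $\Z/n = \Z/|n|$, so there is no loss in assuming $n > 1$. The ring isomorphism $\Z/n \xrightarrow{\sim} \prod_{i=1}^m \Z/p_i^{k_i}$ given by CRT, applied to coefficients, induces an isomorphism of polynomial rings $\Z/n[x] \xrightarrow{\sim} \prod_{i=1}^m (\Z/p_i^{k_i})[x]$. Under this identification a polynomial $f = \sum_j a_j x^j$ is monic of degree $d$ precisely when its leading coefficient is $(1,\dots,1)$ and all higher coefficients vanish, i.e.\ precisely when each component $f_i \in (\Z/p_i^{k_i})[x]$ is monic of degree $d$. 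Hence monic degree-$d$ polynomials in $\Z/n[x]$ correspond bijectively to $m$-tuples $(f_1,\dots,f_m)$ of monic degree-$d$ polynomials in the respective rings $(\Z/p_i^{k_i})[x]$.

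The next step is to check that separability is detected componentwise: $f$ is separable over $\Z/n$ if and only if each $f_i$ is separable over $\Z/p_i^{k_i}$. Since $f$ is monic, I would use the criterion cited above that a monic $f$ is separable iff the ideal $(f,f')$ equals the whole polynomial ring. Differentiation is compatible with the product decomposition, so $(f,f')$ corresponds to the ideal $\prod_i (f_i, f_i')$ of $\prod_i (\Z/p_i^{k_i})[x]$; because an ideal of a finite product of rings is the whole ring exactly when each factor ideal is the whole corresponding ring, we get $(f,f') = \Z/n[x]$ iff $(f_i,f_i') = (\Z/p_i^{k_i})[x]$ for every $i$. (Alternatively one can invoke the fact that $\prod_i A_i$ is a separable $\prod_i R_i$-algebra iff each $A_i$ is a separable $R_i$-algebra, applied to $A_i = (\Z/p_i^{k_i})[x]/f_i$.)

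Combining the two steps, the number $N_n$ of monic separable degree-$d$ polynomials in $\Z/n[x]$ factors as $N_n = \prod_{i=1}^m N_{p_i^{k_i}}$, and Theorem~\ref{thm:mainthm} gives $N_{p_i^{k_i}} = \phi(p_i^{k_i d})$ since $d \geq 2$. Because the $p_i^{k_i d}$ are pairwise coprime and $\phi$ is multiplicative, $\prod_{i=1}^m \phi(p_i^{k_i d}) = \phi\bigl(\prod_{i=1}^m p_i^{k_i d}\bigr) = \phi(n^d)$, which is the claimed count. For the proportion statement I would divide by the total number $n^d$ of monic degree-$d$ polynomials and use
\begin{align*}
  \frac{\phi(n^d)}{n^d} = \prod_{i=1}^m \frac{\phi(p_i^{k_i d})}{p_i^{k_i d}} = \prod_{i=1}^m \left(1 - p_i^{-1}\right).
\end{align*}

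I do not expect any serious obstacle here: the content is entirely in Theorem~\ref{thm:mainthm}, and the present theorem is its multiplicative packaging. The one point requiring a little care is the componentwise characterization of separability under the CRT splitting — making sure the monic criterion $(f,f') = R[x]$ (or the product property of separable algebras) is invoked correctly, and that "monic of degree $d$" is preserved in both directions of the bijection. Everything else is multiplicativity of $\phi$ and a counting of coefficients.
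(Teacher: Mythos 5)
Your proposal is correct and follows essentially the same route as the paper: decompose $\Z/n[x]$ via the Chinese Remainder Theorem, reduce separability to each factor $\Z/p_i^{k_i}[x]$ using the fact that a product algebra is separable over a product ring iff each factor is (the paper's Proposition~\ref{thm:prodsep}, which you mention as your parenthetical alternative), and then apply Theorem~\ref{thm:mainthm} together with multiplicativity of $\phi$. Your primary justification via the ideal criterion $(f,f')=R[x]$ is a slightly more hands-on variant of the same componentwise reduction, but the structure of the argument is identical.
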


Since there exists separable polynomials in $\Z/n[x]$ that are not monic and whose leading coefficient is not a unit, our ultimate aim is to count all separable polynomials in $\Z/n[x]$:
\begin{thm}
    \theoremThree
\end{thm}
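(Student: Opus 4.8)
The plan is to reduce to prime powers by the Chinese Remainder Theorem, then to $\F_p$ by reduction mod $p$, and finally to count squarefree polynomials over $\F_p$. Writing $\Z/n \cong \prod_i \Z/p_i^{k_i}$ gives $\Z/n[x] \cong \prod_i \Z/p_i^{k_i}[x]$, under which a polynomial of degree $\le d$ corresponds, coefficient by coefficient, to a tuple of polynomials each of degree $\le d$. An algebra $\prod_i A_i$ over $\prod_i R_i$ is projective over its enveloping algebra if and only if each $A_i$ is projective over $A_i\otimes_{R_i}A_i^{\mathrm{op}}$, so $f$ is separable if and only if each component $f_i$ is. Hence, writing $S_{p,k}(d)$ for the number of separable polynomials of degree $\le d$ in $\Z/p^k[x]$, the number we want is $\prod_i S_{p_i,k_i}(d)$, and it suffices to prove $S_{p,k}(d) = \phi(p^k)\,p^{kd}(1+p^{-d})$.

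The crux is to determine which $f\in\Z/p^k[x]$ are separable. I would show $f$ is separable if and only if its reduction $\bar f\in\F_p[x]$ is nonzero and squarefree (a nonzero constant counting as squarefree). Necessity is a base-change argument: separability survives $\Z/p^k\to\F_p$, and $(\Z/p^k[x]/(f))\otimes_{\Z/p^k}\F_p = \F_p[x]/(\bar f)$; but $\F_p[x]$ is not separable over $\F_p$, and $\F_p[x]/(\bar f)$ is not separable when $\bar f\ne 0$ is not squarefree, so these cases are excluded. For sufficiency, suppose $\bar f\ne 0$ is squarefree. A Weierstrass-type factorisation — provable by induction on $k$, dividing by the monic part of $\bar f$ over $\F_p$ and lifting, the correction term vanishing because $p^{2(k-1)}=0$ — shows $f = ug$ with $u$ a unit of $\Z/p^k[x]$ and $g$ monic of degree $\deg\bar f$. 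Then $\Z/p^k[x]/(f)=\Z/p^k[x]/(g)$; since $\bar g$ is a unit times $\bar f$, hence squarefree, $\gcd(\bar g,\bar g')=1$, so a lifted B\'ezout relation puts a unit into $(g,g')$, i.e.\ $(g,g')=\Z/p^k[x]$, and as $g$ is monic the criterion quoted in the introduction \cite{Magid2014} shows $g$, hence $f$, is separable.

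Granting this, separability of $f$ depends only on $\bar f$, so $S_{p,k}(d)$ equals the number of nonzero squarefree $g\in\F_p[x]$ of degree $\le d$, times the common size $p^{(k-1)(d+1)}$ of the fibres of the coefficientwise reduction $\{f:\deg f\le d\}\to\{g:\deg g\le d\}$. Every nonzero squarefree $g$ is a unit times a monic squarefree polynomial, so the count over $\F_p$ is $(p-1)M(d)$ where $M(d)$ counts monic squarefree polynomials of degree $\le d$. Over $\F_p$ a monic polynomial is separable precisely when it is squarefree, so for $2\le e\le d$ there are $p^e-p^{e-1}$ of degree $e$ by Carlitz's theorem (Theorem~\ref{thm:mainthm} with $k=1$); adding the one constant and the $p$ linear polynomials, $M(d)=1+p+\sum_{e=2}^d(p^e-p^{e-1})=p^d+1$ by telescoping. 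Thus $S_{p,k}(d)=(p-1)(p^d+1)\,p^{(k-1)(d+1)}=\phi(p^k)\,p^{kd}(1+p^{-d})$, and multiplying over the $p_i$ gives $\phi(n)\,n^d\prod_i(1+p_i^{-d})$.

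The real obstacle is the sufficiency direction of the characterisation in the second step: necessity is a short base-change argument, but to invoke the monic criterion one must first strip off the possibly non-unit top-degree coefficients of $f$, and proving the Weierstrass factorisation over $\Z/p^k$ with a genuine \emph{polynomial} unit — equivalently, that $\Z/p^k[x]/(f)$ is finite free of rank $\deg\bar f$ whenever some coefficient of $f$ is a unit — is the one point requiring more than routine bookkeeping. Once that is in hand, the fibre count and the telescoping sum are entirely elementary.
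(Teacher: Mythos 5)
Your proof is correct, but it diverges from the paper's route at two points, both worth noting. First, where you characterise separability over $\Z/p^k$: the paper gets ``$f$ separable iff $\bar f$ separable over $\F_p$'' (Corollary~\ref{thm:reductionSeparable}) for free from the local criterion \cite[II.7.1]{DeMeyerIngraham1971}, whereas you reprove the hard (sufficiency) direction by hand via a Weierstrass preparation $f=ug$ over the Artinian local ring $\Z/p^k$ followed by a lifted B\'ezout relation for the monic factor. Your sketch of the preparation lemma is sound (the inductive step works because $2(k-1)\geq k$ for $k\geq 2$, and the $k=1$ base case is the field case), and you correctly flag it as the one nontrivial technical point; this is precisely the work the paper outsources to the reference. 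Second, and more interestingly, your counting step is genuinely different: the paper sets up the recurrence $a_d=\phi(p^{kd})\phi(p^k)+p^{k-1}a_{d-1}$ by splitting degree-$d$ separable polynomials according to whether the leading coefficient is a unit, and solves it with the geometric-series Lemma~\ref{thm:geomseries}; you instead observe that separability depends only on $\bar f$, so the answer is the number of nonzero squarefree polynomials of degree $\leq d$ over $\F_p$, namely $(p-1)(p^d+1)$ by a telescoping sum of Carlitz's counts, times the uniform fibre size $p^{(k-1)(d+1)}$ of coefficientwise reduction. The fibre-counting argument is shorter, avoids the recurrence and Lemma~\ref{thm:geomseries} entirely, and makes the shape of the final formula $\phi(p^k)p^{kd}(1+p^{-d})$ transparent; the paper's recurrence, on the other hand, needs no structural input beyond Corollary~\ref{thm:reductionSeparable} and the monic count of Theorem~\ref{thm:mainthm}. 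The final assembly over the prime factors of $n$ by multiplicativity of $\phi$ is identical in both treatments.
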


\section{Monic Separable Polynomials in $\Z/n[x]$}

Let $R$ be a commutative ring. If $A$ is a finitely generated projective $R$-module, then there exists elements $f_1,\dots,f_n\in \Hom_R(A,R)$ and $x_1,\dots,x_n\in A$ such that for all $x\in A$,
\begin{align*}
    x = \sum_{i=1}^n f_i(x)x_i.
\end{align*}
The elements $\{ f_i,x_i\}_{i=1}^n$ are called a \df{dual basis} for $A$. If $A$ is additionally an $R$-algebra, one can define the \df{trace map} to be
\begin{align*}
    \tr:A&\longrightarrow R\\
    x&\longmapsto \sum_{i=1}^n f_i(xx_i) 
\end{align*}
It is easy to see that the trace map is independent of the chosen dual basis. If $f$ is a monic polynomial in $R[x]$, then the algebra $R[x]/f$ is a finitely generated free $R$-module. One possible dual basis for $R[x]/f$ is $x_i = x^i$ with $f_i(g)$ being the coefficient of $x^i$ in $g$, where $i=1,\dots,n-1$ with $n=\deg(f)$. We can use part of Theorem 4.4 in Chapter III of~\cite{DeMeyerIngraham1971} to decide when a polynomial $f\in R[x]$ is separable:
\begin{prop}\label{thm:sepdetect}
    Let $R$ be a commutative ring with no nontrivial idempotents and let $f\in R[x]$ be a degree $n$ monic polynomial. Let $A$ be the matrix whose $(i+1,j+1)$-entry is $\tr(x^{i+j})$. Then $f$ is separable if and only if $\disc(f):= \det(A)\in R$ is a unit.
\end{prop}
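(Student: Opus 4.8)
The plan is to recognise the matrix $A$ as the Gram matrix of the trace form on $S := R[x]/f$ and then to import the trace-form characterisation of separability from the cited Theorem~4.4. First I would record the structural input: since $f$ is monic of degree $n$, the quotient $S = R[x]/f$ is free as an $R$-module on the basis $1, x, \dots, x^{n-1}$, hence finitely generated projective, so the trace map $\tr\colon S \to R$ of the preceding discussion is defined. I would then introduce the symmetric $R$-bilinear \df{trace form} $\beta(a,b) = \tr(ab)$ and compute its Gram matrix in the basis $\{x^i\}_{i=0}^{n-1}$: its $(i+1,j+1)$-entry is $\beta(x^i,x^j) = \tr(x^i x^j) = \tr(x^{i+j})$, which is exactly the matrix $A$ of the statement. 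Thus $\disc(f) = \det A$ is, by definition, the determinant of the trace form.

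The heart of the argument is the equivalence ``$S$ separable over $R$ $\iff$ the trace form $\beta$ is nondegenerate,'' where nondegenerate means that the canonical $R$-linear map $\theta\colon S \to \Hom_R(S,R)$, $a \mapsto \tr(a\cdot{-})$, is an isomorphism. This is precisely the part of Theorem~4.4 in Chapter~III of \cite{DeMeyerIngraham1971} that I intend to invoke. The hypothesis that $R$ has no nontrivial idempotents (equivalently, that $\Spec R$ is connected) is what I would use to match the setup of that theorem and to ensure the criterion applies cleanly to $S$, which here is free of rank $n$; confirming that this hypothesis is exactly what yields the single discriminant-unit criterion is part of the bookkeeping rather than a separate argument.

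It then remains to translate the isomorphism condition on $\theta$ into the unit condition on $\det A$. The target $\Hom_R(S,R)$ is itself free of rank $n$, and writing $\theta$ in the basis $\{x^i\}$ of $S$ and the corresponding dual basis of $\Hom_R(S,R)$, the matrix of $\theta$ is exactly $A$. A square matrix over a commutative ring represents an isomorphism between finite free modules of the same rank if and only if its determinant is a unit; this is the standard adjugate identity $A\cdot\operatorname{adj}(A) = \det(A)\,I$. Combining the three steps gives $f$ separable $\iff$ $\theta$ an isomorphism $\iff$ $\det A \in R^\times$, i.e.\ $\disc(f)$ is a unit.

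The step I expect to be the main obstacle is the first equivalence above: the passage from the module-theoretic definition of separability (projectivity of $S$ over $S \otimes_R S^{\mathrm{op}}$) to nondegeneracy of the trace form. Since this is exactly the imported content of Theorem~4.4, the genuine work is to verify that its hypotheses are satisfied — in particular to check that ``no nontrivial idempotents'' is the condition under which that part of the theorem delivers the clean discriminant-unit criterion — rather than to reprove it. The two surrounding steps, namely identifying $A$ as the Gram matrix and invoking the determinant-versus-isomorphism dictionary, are routine linear algebra over a commutative ring.
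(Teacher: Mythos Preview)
Your proposal is correct and matches the paper's approach: the paper does not give an independent proof of this proposition but simply imports it as ``part of Theorem~4.4 in Chapter~III'' of \cite{DeMeyerIngraham1971}, which is exactly the theorem you invoke. Your write-up supplies the surrounding linear algebra (identifying $A$ as the Gram matrix of the trace form and translating nondegeneracy into $\det A \in R^\times$) that the paper leaves implicit.
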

\begin{exmp}
    Consider $f = x^2 + ax + b\in \Z/p[x]$. Then the matrix $A$ in Theorem~\ref{thm:sepdetect} is
    \begin{align*}
        A=
        \begin{pmatrix}
            2 & -a \\
            -a & a^2 - 2b
        \end{pmatrix}
    \end{align*}
    Its determinant is the familiar $a^2 - 4b$. If $f = x^3 + ax^2 + bx + c$ then
    \begin{align*}
        A = 
        \begin{pmatrix}
            3 & -a & a^2 - 2b\\
            -a & a^2 - 2b & -a^3 + 3ab - 3c\\
            a^2 - 2b & -a^3 + 3ab - 3c & a^4 - 4a^2b + 4ac + 2b^2
        \end{pmatrix}
    \end{align*}
    and its determinant is the less familiar $a^{2}b^{2} - 4a^{3} c - 4b^{3} + 18abc - 27c^{2}$. This explains the relation of separable polynomials to the opening paragraph's bizarre question.
\end{exmp}

We now prove:

\begin{thm}\theoremOne\end{thm}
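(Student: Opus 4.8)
The plan is to reduce the count over $\Z/p^k$ to the classical count over the field $\F_p$ (Carlitz's case $k=1$) by fibering the set of monic degree-$d$ polynomials over the reduction-mod-$p$ map $\Z/p^k[x]\to\F_p[x]$. The engine is Proposition~\ref{thm:sepdetect}, which applies to both rings since $\Z/p^k$ is local and $\F_p$ is a field, hence neither has nontrivial idempotents.

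First I would show that a monic $f\in\Z/p^k[x]$ of degree $d$ is separable if and only if its reduction $\bar f\in\F_p[x]$ is separable. The entries $\tr(x^{i+j})$ of the matrix $A$ in Proposition~\ref{thm:sepdetect} are, by Newton's identities, universal polynomials with integer coefficients in the coefficients of $f$; hence reduction mod $p$ carries $\disc(f)$ to $\disc(\bar f)$. Since an element of $\Z/p^k$ is a unit exactly when its image in $\F_p$ is nonzero, $\disc(f)$ is a unit in $\Z/p^k$ iff $\disc(\bar f)\neq 0$ in $\F_p$, which is the desired equivalence. (Alternatively: every maximal ideal of $\Z/p^k[x]$ contains $p$, so $(f,f')=\Z/p^k[x]$ iff $(\bar f,\bar f')=\F_p[x]$.)

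Next I would count the fibers of reduction on monic degree-$d$ polynomials. The leading coefficient is forced to be $1$, and each of the remaining $d$ coefficients lifts in $|\ker(\Z/p^k\to\F_p)| = p^{k-1}$ ways, so every monic degree-$d$ polynomial over $\F_p$ has exactly $p^{(k-1)d}$ monic degree-$d$ preimages over $\Z/p^k$. Combined with the previous step, the number of monic separable polynomials of degree $d$ in $\Z/p^k[x]$ equals $p^{(k-1)d}\cdot N_d$, where $N_d$ is the number of monic separable — equivalently squarefree — polynomials of degree $d$ in $\F_p[x]$.

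Finally I would compute $N_d$. By unique factorisation in $\F_p[x]$, every monic $f$ of degree $n$ has a unique expression $f=g^2h$ with $g$ monic and $h$ monic squarefree; comparing generating functions gives $1/(1-pt) = \bigl(1/(1-pt^2)\bigr)\sum_{d\ge 0}N_d t^d$, hence $\sum_{d\ge 0}N_d t^d = (1-pt^2)/(1-pt)$ and $N_d = p^d-p^{d-1}$ for $d\ge 2$ (this recovers Carlitz). Multiplying, $p^{(k-1)d}(p^d-p^{d-1}) = p^{kd-1}(p-1) = \phi(p^{kd})$, and dividing by the total $p^{kd}$ of monic degree-$d$ polynomials gives the proportion $1-p^{-1}$. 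I expect the only real subtlety to be the first step — that separability over $\Z/p^k$ is faithfully detected by reduction mod $p$; once that is in hand the rest is the fiber count plus the standard squarefree-polynomial computation.
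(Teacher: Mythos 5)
Your proof is correct and follows essentially the same route as the paper: both reduce separability over $\Z/p^k$ to separability of the reduction modulo $p$ via the discriminant criterion of Proposition~\ref{thm:sepdetect}, using that the discriminant is a universal integer polynomial in the coefficients and that a unit of $\Z/p^k$ is detected modulo $p$. You go further than the paper in two harmless ways, by making the fiber count $p^{(k-1)d}$ explicit and by reproving Carlitz's count $N_d=p^d-p^{d-1}$ with the squarefree generating function instead of citing it.
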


\begin{proof}
    From Proposition~\ref{thm:sepdetect}, $f\in \Z/p^k[x]$ is separable if and only if its discriminant $\disc(f)$ is invertible in $\Z/p^k$. Since $\disc(f)$ is obtained from the coefficients of $f$ through basic arithmetic operations of addition and multiplication, we see that $f$ is separable if and only if its image in $\Z/p^k[x]/p\Z/p^k[x]\cong \Z/p[x]$ is separable. Hence we have reduced the problem to Carlitz's theorem.
\end{proof}

Now that we have determined the number of separable polynomials in $\Z/p^k[x]$, we move on to the general case of $\Z/n$ for any integer $n$ with $|n| > 1$.  We will need the following result.
\begin{prop}[{\cite[Proposition II.2.1.13]{DeMeyerIngraham1971}}]\label{thm:prodsep}
    Let $R_1$ and $R_2$ be commutative rings and let $A_i$ be a commutative $R_i$ algebra for $i=1,2$. Then $A_1\times A_2$ is a separable $R_1\times R_2$-algebra if and only if $A_i$ is a separable $R_i$ algebra for $i=1,2$.
\end{prop}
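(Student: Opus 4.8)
The plan is to reduce the statement to the behaviour of the enveloping algebra under the product decomposition, using the definition recalled in the introduction: an $R$-algebra $A$ is separable precisely when $A$ is projective as a module over $A\otimes_R A^{\mathrm{op}}$, with action $(a\otimes a')b = aba'$. Writing $R = R_1\times R_2$ and $A = A_1\times A_2$, the whole proof rests on identifying the enveloping algebra of the product as the product of the enveloping algebras, namely
\begin{align*}
    A\otimes_R A^{\mathrm{op}}\;\cong\;\bigl(A_1\otimes_{R_1}A_1^{\mathrm{op}}\bigr)\times\bigl(A_2\otimes_{R_2}A_2^{\mathrm{op}}\bigr),
\end{align*}
after which everything is formal. (Commutativity is not needed for the argument; it only makes $A^{\mathrm{op}}=A$.)

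To establish the displayed isomorphism I would record the orthogonal central idempotents $\epsilon_1=(1,0)$ and $\epsilon_2=(0,1)$ of $R$ and their images in $A$. Expanding $1\otimes 1=(\epsilon_1+\epsilon_2)\otimes(\epsilon_1+\epsilon_2)$ and sliding the $R$-central idempotents across the tensor symbol, the cross terms $\epsilon_1\otimes\epsilon_2$ and $\epsilon_2\otimes\epsilon_1$ vanish because $\epsilon_1\epsilon_2=0$, leaving the orthogonal central idempotents $\epsilon_1\otimes\epsilon_1$ and $\epsilon_2\otimes\epsilon_2$ summing to $1\otimes1$. These split $A\otimes_R A^{\mathrm{op}}$ as a product of two rings, and since the $R$-action on the $i$-th factor already factors through $R_i$ (the other idempotent acting as zero), the $i$-th piece collapses from a tensor product over $R$ to $A_i\otimes_{R_i}A_i^{\mathrm{op}}$. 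Next I would track the module action: on $b=(b_1,b_2)$ the idempotent $\epsilon_i\otimes\epsilon_i$ acts by $b\mapsto\epsilon_i b\epsilon_i$, which is exactly the projection of $A$ onto $A_i$. Hence, as a module over the product ring $\bigl(A_1\otimes_{R_1}A_1^{\mathrm{op}}\bigr)\times\bigl(A_2\otimes_{R_2}A_2^{\mathrm{op}}\bigr)$, the algebra $A$ is the product of the $A_i\otimes_{R_i}A_i^{\mathrm{op}}$-module $A_i$. A module over a finite product $S_1\times S_2$ is projective if and only if each of its two components is projective over the corresponding $S_i$; applying this with $S_i = A_i\otimes_{R_i}A_i^{\mathrm{op}}$ yields that $A$ is projective over its enveloping algebra if and only if each $A_i$ is projective over $A_i\otimes_{R_i}A_i^{\mathrm{op}}$, which is the claim.

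The conceptual content is slight, so I expect the only genuine work to be the bookkeeping in the displayed isomorphism: checking that the cross terms really die, that the surviving factors genuinely reduce from $\otimes_R$ to $\otimes_{R_i}$ (one verifies that the defining relations of the two tensor products coincide once $\epsilon_j$ acts as zero for $j\neq i$), and that these identifications are compatible both with the ring structure on the enveloping algebra and with its module action on $A$. An alternative that avoids the projectivity-over-a-product lemma is to argue directly with separability idempotents: letting $\mu\colon A\otimes_R A^{\mathrm{op}}\to A$ denote multiplication, an element $e$ with $\mu(e)=1$ and $(a\otimes1)e=(1\otimes a)e$ for all $a$ exists if and only if, writing $e=(e_1,e_2)$ under the product decomposition, each $e_i$ is such an idempotent for $A_i$. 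The forward direction simply assembles $(e_1,e_2)$ from the two given idempotents, and the reverse reads them off componentwise.
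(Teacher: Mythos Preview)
Your argument is correct. The paper does not supply its own proof of this proposition; it is quoted verbatim from \cite[Proposition II.2.1.13]{DeMeyerIngraham1971} and used as a black box. So there is nothing in the paper to compare against beyond noting that your approach---decomposing the enveloping algebra via the orthogonal central idempotents $\epsilon_i\otimes\epsilon_i$ and then invoking the componentwise characterisation of projectivity over a finite product of rings---is the standard one and is essentially how the cited reference proceeds. Your alternative via separability idempotents is equally valid and arguably cleaner, since it bypasses the (easy) lemma on projective modules over products.
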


\begin{thm}
\theoremTwo
\end{thm}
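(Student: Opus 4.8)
The plan is to reduce the count over $\Z/n$ to the prime-power counts already established, using the Chinese Remainder Theorem at the level of polynomial rings together with Proposition~\ref{thm:prodsep}. Concretely, write $n = p_1^{k_1}\cdots p_m^{k_m}$ and invoke the ring isomorphism $\Z/n \cong \Z/p_1^{k_1}\times\cdots\times\Z/p_m^{k_m}$, which induces an isomorphism of polynomial rings $\Z/n[x] \cong \prod_{i=1}^m (\Z/p_i^{k_i})[x]$ carrying a monic polynomial $f$ of degree $d$ to the tuple $(f_1,\dots,f_m)$ of its reductions, each $f_i$ monic of degree $d$. Under this identification $\Z/n[x]/f \cong \prod_i (\Z/p_i^{k_i})[x]/f_i$ as an algebra over $\prod_i \Z/p_i^{k_i}$, so Proposition~\ref{thm:prodsep} (applied inductively to $m$ factors) says $f$ is separable over $\Z/n$ if and only if each $f_i$ is separable over $\Z/p_i^{k_i}$.

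Next I would count. The map $f \mapsto (f_1,\dots,f_m)$ is a bijection from monic degree-$d$ polynomials over $\Z/n$ to $m$-tuples of monic degree-$d$ polynomials over the respective $\Z/p_i^{k_i}$, and by the previous paragraph it restricts to a bijection on the separable loci. Hence the number of monic separable polynomials of degree $d$ in $\Z/n[x]$ is the product, over $i$, of the number of monic separable polynomials of degree $d$ in $\Z/p_i^{k_i}[x]$. By Theorem~\ref{thm:mainthm} each factor equals $\phi(p_i^{kd}) = p_i^{k_id}(1-p_i^{-1})$... more precisely $p_i^{k_i d - 1}(p_i - 1)$, and the product telescopes: $\prod_i p_i^{k_i d - 1}(p_i-1) = \bigl(\prod_i p_i^{k_i d}\bigr)\bigl(\prod_i (1-p_i^{-1})\bigr) = n^d \prod_i (1-p_i^{-1}) = \phi(n^d)$, using multiplicativity of $\phi$ and $\phi(n^d) = n^d\prod_i(1-p_i^{-1})$. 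Dividing by the total count $n^d$ of monic degree-$d$ polynomials gives the stated proportion $\prod_i(1-p_i^{-1})$.

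The only subtlety — and it is the step to be careful about rather than a genuine obstacle — is that Proposition~\ref{thm:prodsep} as quoted is stated for two factors, so the $m$-factor version needs a one-line induction, and one must check that the isomorphism $\Z/n[x]/f \cong \prod_i (\Z/p_i^{k_i})[x]/f_i$ is genuinely an isomorphism of $\prod_i \Z/p_i^{k_i}$-algebras, i.e. that the CRT decomposition of the base ring is compatible with the decomposition of the quotient. This is immediate because $\Z/n[x]/f \cong (\Z/n)[x]/f \otimes_{\Z/n}(\prod_i \Z/p_i^{k_i})$ and tensoring distributes over the product, but it is worth spelling out since the whole argument hinges on it. Note also that Theorem~\ref{thm:mainthm} requires $d \geq 2$, which is exactly the hypothesis here, and that $\Z/p_i^{k_i}$ has no nontrivial idempotents so the earlier results apply to each factor.
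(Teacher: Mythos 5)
Your proposal is correct and follows essentially the same route as the paper: decompose $\Z/n[x]$ via the Chinese Remainder Theorem, apply Proposition~\ref{thm:prodsep} to reduce separability to each prime-power factor, and then multiply the counts from Theorem~\ref{thm:mainthm} using the multiplicativity of $\phi$. Your extra remarks on the $m$-factor induction and the compatibility of the algebra isomorphism are sound elaborations of steps the paper leaves implicit.
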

\begin{proof}
    Factor $n=p_1^{k_1}p_2^{k_2}\cdots p_m^{k_m}$ where the $p_i$ are the prime factors of $n$ so that $\Z/n \cong \Z/p_1^{k_1}\times\cdots\times\Z/p_m^{k_m}$. Then we have
    \begin{align}\label{eqn:znDecomp}
    \Z/n[x] \cong \Z/p_1^{k_1}[x]\times\cdots\times\Z/p_m^{k_m}[x].
\end{align}
An element $f\in \Z/n[x]$ corresponds to an element $(f_1,\dots,f_m)\in \Z/p_1^{k_1}[x]\times\cdots\times\Z/p_m^{k_m}[x]$. From Proposition~\ref{thm:prodsep}, we see that $f$ is separable if and only if $f_i$ is a separable polynomial in $\Z/p_i^{k_i}$. Therefore, the number of monic polynomials over $\Z/n$ that are separable of degree $d$ is equal to the number of tuples $(f_1,\dots,f_m)$ such that $f_i$ is a separable monic polynomial over $\Z/p_i^{k_i}$ for each $i$ and $\deg(f_i) = d$. The result now follows from Theorem~\ref{thm:mainthm} and the fact that $\phi$ is multiplicative.
\end{proof}

\begin{exmp}
    For $n = 614889782588491410$ (the product of the first fifteen primes) the proportion of monic polynomials over $\Z/n$ that are separable is $1605264998400/11573306655157$, or about $0.138704092635850$. The formula shows that as the number of prime factors of $n$ increases to infinity, the proportion of separable polynomials goes to zero.
\end{exmp}

\section{Arbitrary Polynomials and Separability}

In the case of fields, it suffices to look at monic polynomials since one can always multiply such a polynomial by a unit to make it monic, and this does not change the ideal it generates. For general rings, this is not so. And, it is clear from the isomorphism in~\eqref{eqn:znDecomp} that there are many polynomials that are separable are not monic and whose leading coefficient is not invertible. 
\begin{exmp}
   In the ring $\Z/6[x]$, the polynomial $f = 3x^2 +  x + 5$ is separable and irreducible, but its leading coefficient is not a unit in $\Z/6$.
\end{exmp}
In this section we calculate the number of separable polynomials of at most degree $d$ where $d\geq 1$, and whose leading coefficient is arbitrary. As before, this result depends on the result for polynomials in $\Z/p^k[x]$. We have already observed that a monic polynomial is separable in $\Z/p^k[x]$ if and only if its reduction modulo $p$ is reducible in $\Z/p[x]$. To handle arbitrary polynomials, we use the following theorem.
\begin{prop}[{\cite[II.7.1]{DeMeyerIngraham1971}}]Let $R$ be a commutative ring. For a finitely generated $R$-algebra $A$, the following are equivalent:
    \begin{enumerate}
        \item $A$ is a separable $R$-algebra,
        \item $A_m$ is a separable $R_m$ algebra for every maximal ideal $m$ of $R$, and
        \item $A/mA$ is a separable $R/m$-algebra for every maximal ideal $m$ of $R.$
    \end{enumerate}
\end{prop}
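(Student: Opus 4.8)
The plan is to work with the enveloping algebra $A^e = A \otimes_R A^{\mathrm{op}}$ and the multiplication map $\mu\colon A^e \to A$, $a \otimes a' \mapsto aa'$, which is a homomorphism of left $A^e$-modules for the module structure on $A$ described in the introduction. By definition $A$ is separable over $R$ exactly when $A$ is $A^e$-projective, and since $\mu$ is surjective this is equivalent to $\mu$ admitting an $A^e$-linear section, i.e. to the existence of a separability idempotent $e \in A^e$ with $\mu(e)=1$ and $(a\otimes 1)e = (1\otimes a)e$ for all $a$. Applying $\Hom_{A^e}(A,-)$ to $A^e \xrightarrow{\mu} A \to 0$, this is in turn equivalent to surjectivity of $\mu_*\colon \Hom_{A^e}(A,A^e)\to\Hom_{A^e}(A,A)$ (once $\mathrm{id}_A$ lies in the image, $A$ is a direct summand of $A^e$ and $\mu_*$ is onto), that is, to vanishing of the $R$-module $C := \operatorname{coker}(\mu_*)$. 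The one structural fact I would record at the outset is that $A$ is a finitely presented (indeed cyclic) $A^e$-module: if $a_1,\dots,a_s$ generate $A$ as an $R$-algebra, then $\ker\mu$ is generated as an ideal of $A^e$ by the elements $a_i\otimes 1 - 1\otimes a_i$.

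First I would dispatch the two descending implications, which are formal consequences of base change. For any ring map $R\to S$ one has $(A\otimes_R S)^e \cong A^e\otimes_R S$ compatibly with the multiplication maps, so a separability idempotent $e$ for $A/R$ yields the idempotent $e\otimes 1$ for $(A\otimes_R S)/S$. Taking $S=R_m$ gives $(1)\Rightarrow(2)$; taking $S=R/m$ (applied to $A/R$ directly, or to $A_m/R_m$ via $R_m\to R_m/mR_m\cong R/m$) makes $A/mA$ separable over $R/m$, yielding both $(1)\Rightarrow(3)$ and $(2)\Rightarrow(3)$. Thus $(1)\Rightarrow(2)\Rightarrow(3)$.

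The substance is $(3)\Rightarrow(1)$, the local-to-global step, which I would run through the module $C$. Because $A$ is finitely presented over $A^e$ and $R\to R_m$ is flat, the functor $\Hom_{A^e}(A,-)$ commutes with the localization $-\otimes_R R_m$; hence $C_m$ is the corresponding cokernel for $A_m$ over $R_m$, and since a module vanishes iff all of its localizations at maximal ideals do, I may assume $R$ is local with maximal ideal $m$ and residue field $\kappa = R/m$. Now I would invoke finiteness: in this setting $A$ is finitely generated projective as an $R$-module, so $C$ is a finitely generated $R$-module and Nakayama's lemma reduces $C=0$ to $C\otimes_R\kappa = 0$. By right exactness $C\otimes_R\kappa = \operatorname{coker}(\mu_*\otimes_R\kappa)$, and the crux is to identify this with the cokernel that detects separability of the fibre $A\otimes_R\kappa = A/mA$ over $\kappa$; that cokernel is zero by hypothesis $(3)$, whence $C=0$ and $(1)$ follows.

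I expect the identification in the last step to be the chief obstacle. Forming $\Hom_{A^e}(A,A^e)$ does not commute with the non-flat base change $R\to\kappa$ for free, so making $\mu_*\otimes_R\kappa$ agree with the multiplication-induced map for $A/mA$ over $\kappa$ is precisely where one must use that $A$, and hence $A^e$, is finitely generated projective over $R$, so that these $\Hom$-modules base-change correctly and the residue-field cokernel really is the fibre's separability obstruction. Granting that identification, the cycle $(1)\Rightarrow(2)\Rightarrow(3)\Rightarrow(1)$ closes and the three conditions are equivalent.
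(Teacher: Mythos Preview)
The paper does not prove this proposition; it is quoted from \cite[II.7.1]{DeMeyerIngraham1971} and used as a black box, so there is no in-paper argument to compare against and your sketch must stand on its own.

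Your treatment of $(1)\Rightarrow(2)\Rightarrow(3)$ via base change of a separability idempotent is standard and correct, as is the observation that $\ker\mu$ is generated by the elements $a_i\otimes 1-1\otimes a_i$, so that $A$ is finitely presented over $A^e$ and the cokernel $C$ localizes well.

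The genuine gap is in the local step of $(3)\Rightarrow(1)$. You assert that ``in this setting $A$ is finitely generated projective as an $R$-module'' and then use this both to invoke Nakayama on $C$ and to pass the $\Hom$'s through the non-flat base change $R\to\kappa$. But the hypothesis is only that $A$ is finitely generated as an $R$-\emph{algebra}, and your claim is false in general: over a discrete valuation ring $R$ with uniformizer $\pi$, the algebra $A=R[x]/(\pi x-1)\cong R[\pi^{-1}]$ is a finitely generated $R$-algebra with $A/mA=0$ (so condition~(3) holds trivially) and is indeed separable over $R$, yet it is not finitely generated as an $R$-module. With module-finiteness gone, $\Hom_{A^e}(A,A)\cong Z(A)$ need not be a finitely generated $R$-module, so neither is its quotient $C$, and Nakayama over $R$ does not apply; likewise the identification of $C\otimes_R\kappa$ with the fibre's obstruction, which you correctly flag as ``the chief obstacle,'' has no footing without projectivity of $A$ and $A^e$ over $R$. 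Your outline therefore does not close $(3)\Rightarrow(1)$ as written: either a separate argument is needed to force module-finiteness from~(3) (the example shows this cannot be done in general), or the passage from the residue field back to $R$ must be organized so as to avoid Nakayama on an $R$-module altogether.
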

Since $\Z/p^k$ is a local ring with unique maximal ideal $(p)$:
\begin{cor}\label{thm:reductionSeparable}
    A polynomial $f\in \Z/p^k[x]$ is separable if and only if its reduction in $\Z/p[x]$ modulo $p$ is separable.
\end{cor}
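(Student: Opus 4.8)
The plan is to derive this directly from the preceding proposition (\cite[II.7.1]{DeMeyerIngraham1971}), exploiting the fact that $\Z/p^k$ is local. Set $R = \Z/p^k$; this ring has a single maximal ideal, namely $m = (p)$, and $R/m \cong \Z/p$. For a polynomial $f \in R[x]$, put $A = R[x]/f$. Since $A$ is generated as an $R$-algebra by the image of $x$, it is a finitely generated $R$-algebra, so the proposition applies to it. By definition, $f$ is separable precisely when $A$ is a separable $R$-algebra, which is condition (1) of the proposition.

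Because $(p)$ is the \emph{only} maximal ideal of $R$, the quantifier ``for every maximal ideal $m$'' in condition (3) collapses to the single ideal $m = (p)$. Thus condition (3) reads: $A/pA$ is a separable algebra over $R/(p) \cong \Z/p$. The equivalence (1) $\Leftrightarrow$ (3) of the proposition therefore says exactly that $A$ is separable over $\Z/p^k$ if and only if $A/pA$ is separable over $\Z/p$. It remains to identify $A/pA$ with the ring attached to the reduced polynomial.

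The key computation is the isomorphism
\begin{align*}
    A/pA \;=\; \frac{\Z/p^k[x]/(f)}{(p)} \;\cong\; \frac{\Z/p^k[x]}{(f, p)} \;\cong\; \frac{\Z/p[x]}{(\bar f)},
\end{align*}
where $\bar f \in \Z/p[x]$ denotes the reduction of $f$ modulo $p$; the final step uses that reducing $\Z/p^k[x]$ modulo $p$ yields $\Z/p[x]$ and carries $f$ to $\bar f$. By the definition of a separable polynomial, $\Z/p[x]/(\bar f)$ is a separable $\Z/p$-algebra if and only if $\bar f$ is a separable polynomial over $\Z/p$. Chaining this with the equivalence of the previous paragraph yields the claim. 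The only point requiring care is this identification of $A/pA$, i.e.\ checking that passing to the quotient by $f$ commutes with reduction modulo $p$; everything else is a direct application of the cited local--global criterion together with the observation that $\Z/p^k$ has a unique maximal ideal. Notably, no monicity hypothesis on $f$ is used, which is precisely what will make the corollary applicable to arbitrary polynomials in the next section.
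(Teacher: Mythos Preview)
Your proof is correct and follows exactly the same route as the paper: apply the cited local--global criterion (condition (1) $\Leftrightarrow$ (3)) together with the observation that $\Z/p^k$ is local with unique maximal ideal $(p)$. The paper leaves the identification $A/pA \cong \Z/p[x]/(\bar f)$ implicit, whereas you spell it out, but the argument is otherwise identical.
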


\begin{exmp}\label{exmp:degzerodegone}
    Let $a\in \Z/p^k\subseteq\Z/p^k[x]$ be a constant polynomial. In $\Z/p[x]$, the zero polynomial is not separable since $\Z/p[x]$ is not a separable $\Z/p$-algebra; indeed, a separable algebra over field must be finite-dimensional over that field \cite[II.2.2.1]{DeMeyerIngraham1971}. Therefore $a$ is separable if and only if $a$ is a unit in $\Z/p^k$. Thus, there are $\phi(p^k)$ separable polynomials in $\Z/p^k[x]$ of degree zero. 

    We can proceed inductively to calculate the number of separable polynomials of degree one. They are one of two types, according to Corollary~\ref{thm:reductionSeparable}:
    \begin{enumerate}
        \item $ux + b$ where $u$ is a unit.
        \item $ux + b$ where $u\not=0$ is not a unit and $b$ is a unit.
    \end{enumerate}
    In the first case, we already know there are $p^k$ monic separable linear polynomials, so there are $\phi(p^k)p^k$ polynomials whose leading coefficient is a unit. In the second case there are $(p^k - \phi(p^k)-1)\phi(p^k)$ polynomials of the form $ux + b$ where $u\not=0$ is not a unit but $b$ is a unit. Adding these two together, we see that there $\phi(p^k)(2p^k - \phi(p^k) - 1) = \phi(p^k)(p^k + p^{k-1} - 1)$ linear separable polynomials in $\Z/p^k[x]$, and hence $\phi(p^k)(p^k + p^{k+1})$ separable polynomials of degree at most one.
\end{exmp}

Now we will derive a formula for the number of separable polynomials with arbitrary leading coefficient and degree at most $d$. But first, we will need the following elementary geometric sum:
\begin{lem}\label{thm:geomseries}Let $\beta = p^k$ and $\lambda = p^{k-1}$. Then
    \begin{align*}\phi(\beta^d) + \lambda\phi(\beta^{d-1}) + \cdots + \lambda^{d-2}\phi(\beta^2)=p^{(k-1)d + 1}(p^{d-1}-1).
    \end{align*}
\end{lem}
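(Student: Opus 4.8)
The plan is to expand every term via the elementary identity $\phi(p^m) = p^m - p^{m-1} = p^{m-1}(p-1)$ and then recognise what is left as a finite geometric series. Writing the left-hand side as $\sum_{j=2}^{d}\lambda^{d-j}\phi(\beta^j)$ with $\beta = p^k$ and $\lambda = p^{k-1}$, the summand indexed by $j$ becomes $p^{(k-1)(d-j)}\cdot p^{kj-1}(p-1)$.

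The one manipulation worth carrying out is the exponent arithmetic: $(k-1)(d-j) + (kj-1) = (k-1)d + j - 1$, which depends on $j$ only through the single term $+j$. Hence the $j$-th summand equals $(p-1)\,p^{(k-1)d}\,p^{j-1}$, and pulling the constant factor $(p-1)\,p^{(k-1)d}$ out of the sum leaves $\sum_{j=2}^{d} p^{j-1} = \sum_{i=1}^{d-1}p^{i}$, a geometric series beginning at $p^{1}$.

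To finish I would apply $\sum_{i=1}^{d-1}p^{i} = p\cdot\frac{p^{d-1}-1}{p-1}$ and multiply back through; the factor $p-1$ cancels against the one from $\phi$, leaving exactly $p^{(k-1)d+1}(p^{d-1}-1)$, as claimed. There is no genuine obstacle here — the only point requiring attention is the indexing, since the sum omits the $j=1$ term $\phi(\beta)$, and it is precisely this omission that makes the geometric series start at $p^{1}$ rather than $p^{0}$ and thereby produces the leading factor of $p$ in the stated formula.
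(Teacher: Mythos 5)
Your proof is correct and is essentially the paper's own argument: both expand $\phi(p^{kj}) = p^{kj-1}(p-1)$, pull out a common factor, and sum the resulting geometric series (you use ratio $p$ starting at $p^1$; the paper uses ratio $p^{-1}$ starting at $1$, which is the same computation). No issues.
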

\begin{proof}We sum a geometric series
    \begin{align*}
        \phi(\beta^d) + \lambda\phi(\beta^{d-1}) + \cdots + \lambda^{d-2}\phi(\beta^2) &= p^{kd-1}(p-1)\left[ 1 + \frac{\lambda}{p^k} + \left( \frac{\lambda}{p^k} \right)^2 + \cdots + \left( \frac{\lambda}{p^k} \right)^{d-2} \right]\\
        &= p^{kd-1}(p-1)\left[ 1 + p^{-1} + p^{-2} + \cdots + p^{-(d-2)}\right]\\
        &= p^{kd-1}(p-1)\frac{1 - p^{d-1}}{(1-p)p^{d-2}}\\
        &= p^{kd-1}\frac{p^{d-1}-1}{p^{d-2}}\\
        &= p^{(k-1)d + 1}(p^{d-1}-1).
    \end{align*}
\end{proof}

\begin{thm}\label{thm:primepowerlessthan}
    For $d\geq 1$, number of separable polynomials $f$ such that $\deg(f)\leq d$ in $\Z/p^k[x]$ is
    \begin{align*}
        \phi(p^k)p^{(k-1)d}(p^d+1) = \phi(p^k)p^{kd}(1+p^{-d})
    \end{align*}
\end{thm}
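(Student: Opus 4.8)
The plan is to count separable polynomials in $\Z/p^k[x]$ of each exact degree $e$ with $0 \le e \le d$, and then sum. By Corollary~\ref{thm:reductionSeparable}, a polynomial is separable in $\Z/p^k[x]$ if and only if its reduction mod $p$ is separable in $\Z/p[x]$; this reduction lowers degree precisely when the leading coefficient lies in $(p)$. So I would stratify a degree-$e$ polynomial $f = c_e x^e + \cdots + c_0$ by the quantity $j \ge 0$ defined by $v_p(c_e) = j$ (with $j \le k$; note $j=k$ means $c_e = 0$, i.e.\ $f$ has degree $< e$ as written, so one must be careful to only count $f$ with $c_e \ne 0$, i.e.\ $0 \le j \le k-1$, but allow the lower coefficients to be anything). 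The reduction of $f$ mod $p$ then has degree $e - r$ for some $r$ depending on how many leading coefficients are divisible by $p$; separability of $f$ is equivalent to separability of that lower-degree reduction in $\Z/p[x]$, which by Carlitz's count (Theorem~\ref{thm:mainthm}, or the base cases in Example~\ref{exmp:degzerodegone}) is a known number.

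Concretely, I would first record the count $S_e$ of separable polynomials in $\Z/p^k[x]$ of exact degree $e$ (leading coefficient $c_e \ne 0$). Fix the $p$-adic valuation $j = v_p(c_e) \in \{0,1,\dots,k-1\}$. There are $p^{k-1-j}(p-1) = \phi(p^{k-j})$ choices for $c_e$ with that valuation; the remaining $e$ coefficients $c_{e-1},\dots,c_0$ range over a set whose separable members are governed by the reduction mod $p$. When $j = 0$ the leading term survives, so $f$ mod $p$ has degree $e$ and, for $e \ge 2$, there are $\phi(p^e \cdot p^{k-1} \cdots)$—more precisely $p^{(k-1)e} \cdot (\text{Carlitz count at degree } e)$—choices; for $j \ge 1$ the reduction has degree $< e$ and one recurses on the top coefficient of the reduced polynomial. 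The bookkeeping here is the analogue of the two-type split in Example~\ref{exmp:degzerodegone} carried out at all degrees, and the geometric sum packaging it is exactly Lemma~\ref{thm:geomseries}.

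Assembling, the total $\sum_{e=0}^{d} S_e$ should collapse: the $j=0$ contributions across $e = 2,\dots,d$ give the ``monic-like'' terms $p^{(k-1)e}\phi(p^e)$-type pieces, the degree $0$ and $1$ contributions are handled by Example~\ref{exmp:degzerodegone}, and the cross terms with $j \ge 1$ feed into the series $\phi(\beta^d) + \lambda\phi(\beta^{d-1}) + \cdots$ of Lemma~\ref{thm:geomseries} with $\beta = p^k$, $\lambda = p^{k-1}$. After substituting the closed form $p^{(k-1)d+1}(p^{d-1}-1)$ from that lemma and combining with the degree-$\le 1$ base count $\phi(p^k)(p^k + p^{k+1})$, the sum telescopes to $\phi(p^k)p^{(k-1)d}(p^d + 1)$, which equals $\phi(p^k)p^{kd}(1 + p^{-d})$ as claimed.

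The main obstacle I anticipate is the careful classification of a polynomial by how many of its leading coefficients are divisible by $p$: one must track both the valuation of the true leading coefficient and the degree drop upon reduction, and ensure no double counting between ``$f$ has degree exactly $e$ with $v_p(c_e) = j \ge 1$'' and ``$f$ has smaller degree.'' Once that stratification is pinned down correctly, the remaining work is the routine geometric summation already isolated in Lemma~\ref{thm:geomseries}, so the combinatorial setup—not the algebra—is where the care is needed.
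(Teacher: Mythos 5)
Your proposal is correct and follows essentially the same route as the paper: the paper organizes the same unit-versus-nonunit leading-coefficient split (justified by Corollary~\ref{thm:reductionSeparable}) into the recurrence $a_d = \phi(p^{kd})\phi(p^k) + p^{k-1}a_{d-1}$, where the nonzero non-units contribute $(p^k-\phi(p^k)-1)a_{d-1}$, and then solves it using Lemma~\ref{thm:geomseries} and the base case $a_1$ from Example~\ref{exmp:degzerodegone}---exactly the ingredients you name. The ``bookkeeping'' you defer (that a polynomial with nonzero non-unit leading coefficient is separable iff its lower-degree tail is, so those strata contribute the number of nonzero non-units times the count one degree down) is precisely what that recurrence encodes, so your finer stratification by the exact valuation $j$ is more than is needed.
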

\begin{proof}
    Let $a_d$ be the number of separable polynomials $f$ in $\Z/p^k[x]$ with arbitrary leading coefficient and such that $\deg(f)\leq d$. We follow the calculation method in Example~\ref{exmp:degzerodegone} A separable polynomial of degree $d$ must be either have unit leading coefficient, or else it must be of the form $ux^d + g$ where $u\not=0$ is not a unit and $g$ is a separable polynomial of $\deg(g) < d$. We have already shown that the number of monic polynomials of degree $d\geq 2$ in $\Z/p^k[x]$ that are also separable is $\phi(p^{kd})$. Therefore, the number of separable polynomials with unit leading coefficient and of degree exactly $d$ for $d\geq 2$ is
    \begin{align*}
        \phi(p^{kd})\phi(p^k).
    \end{align*}
     With our notation, the number $a_d - a_{d-1}$ is the number of separable polynomials of degree exactly $d$, and our reasoning shows that we have the recurrence relation
\begin{align*}
    a_d - a_{d-1} = \phi(p^{kd})\phi(p^k) + (p^k - \phi(p^k) - 1)a_{d-1},
\end{align*}
which simplifies to
\begin{align*}
    a_d &= \phi(p^{kd})\phi(p^k) + (p^k - \phi(p^k))a_{d-1}\\
    &= \phi(p^{kd})\phi(p^k) + p^{k-1}a_{d-1}
\end{align*}
as long as $d\geq 2$. To simplify notation for intermediate computations, let us set $\beta = p^k$ and $\lambda = p^{k-1}$. Then $a_d = \phi(\beta^d)\phi(\beta) + \lambda a_{d-1}$. It is easy to see that
\begin{align*}
    a_d = \phi(\beta)\left[\phi(\beta^d) + \lambda\phi(\beta^{d-1}) + \cdots + \lambda^{d-2}\phi(\beta^2)\right] + \lambda^{d-1}a_1.
\end{align*}
Example~\ref{exmp:degzerodegone} shows that $a_1 = \phi(p^k)(p^k + p^{k-1})$ so that $\lambda^{d-1}a_1 = \phi(p^k)p^{(k-1)d}(p+1)$. Now, using Lemma~\ref{thm:geomseries}, we get
\begin{align*}
    a_d &= \phi(p^k)\left[ p^{(k-1)d + 1}(p^{d-1} - 1) \right] + \phi(p^k)p^{(k-1)d}(p+1)\\
    &= \phi(p^k)p^{(k-1)d}\left[ p(p^{d-1}-1) + p + 1 \right]\\
    &= \phi(p^k)p^{(k-1)d}(p^d+1).
\end{align*}
This takes care of $d\geq 2$. Putting $d=1$ into this last line shows that it is equal to $a_1$, so the formula is also valid for $d=1$.
\end{proof}
\begin{thm}\theoremThree
\end{thm}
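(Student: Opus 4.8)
The plan is to mimic the proof of the $\Z/n$ analogue of Carlitz's theorem (the second main theorem above), reducing to the prime-power count of Theorem~\ref{thm:primepowerlessthan} through the Chinese Remainder Theorem; the only new wrinkle is that we must track polynomials of bounded degree instead of monic ones.

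First I would invoke the isomorphism~\eqref{eqn:znDecomp}, $\Z/n[x]\cong\Z/p_1^{k_1}[x]\times\cdots\times\Z/p_m^{k_m}[x]$, and observe that it is coefficient-wise: a polynomial of degree at most $d$ over $\Z/n$ is nothing but a tuple of $d+1$ coefficients in $\Z/n$, so under $\Z/n\cong\prod_i\Z/p_i^{k_i}$ it corresponds bijectively to a tuple $(f_1,\dots,f_m)$ with $f_i\in\Z/p_i^{k_i}[x]$ and each $\deg(f_i)\leq d$. I would emphasise that this bijection works precisely because we bound the degree rather than fix it: a coefficient that is nonzero in $\Z/n$ may vanish in some factor $\Z/p_i^{k_i}$, so the notion of leading coefficient (and hence of ``degree exactly $d$'') does not pass through the decomposition, whereas ``degree at most $d$'' does. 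Then, applying Proposition~\ref{thm:prodsep} a total of $m-1$ times, the polynomial $f$ is separable over $\Z/n$ exactly when each $f_i$ is separable over $\Z/p_i^{k_i}$.

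Next I would conclude that the desired count is the product $\prod_{i=1}^m N_i$, where $N_i$ is the number of separable polynomials of degree at most $d$ in $\Z/p_i^{k_i}[x]$; by Theorem~\ref{thm:primepowerlessthan} this is $N_i=\phi(p_i^{k_i})\,p_i^{k_i d}(1+p_i^{-d})$. The final step is pure bookkeeping: regroup the product over $i$, use multiplicativity of $\phi$ to get $\prod_i\phi(p_i^{k_i})=\phi(n)$ and $\prod_i p_i^{k_i d}=n^d$, and read off $\phi(n)\,n^d\prod_i(1+p_i^{-d})$.

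I do not anticipate a genuine obstacle; the only point requiring care — and the reason the preceding sections were deliberately phrased in terms of the bound $\deg(f)\leq d$ rather than $\deg(f)=d$ — is the verification that it is exactly this bounded-degree condition that is preserved by the CRT decomposition, which it is, being purely a condition on the finite list of coefficients.
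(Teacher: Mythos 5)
Your proposal is correct and follows essentially the same route as the paper: reduce via the Chinese Remainder Theorem and Proposition~\ref{thm:prodsep} to the prime-power count of Theorem~\ref{thm:primepowerlessthan}, then use multiplicativity of $\phi$ to assemble $\phi(n)n^d\prod_i(1+p_i^{-d})$. The paper leaves the CRT step implicit in a one-line proof, whereas you spell it out and correctly flag the key subtlety --- that ``degree at most $d$'' (unlike ``degree exactly $d$'') is preserved coefficient-wise under the decomposition --- which is precisely why the theorem is stated with a degree bound.
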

\begin{proof}
    This follows from Theorem~\ref{thm:primepowerlessthan} and the fact that Euler's totient function $\phi$ is a multiplicative arithmetic function in the sense that $\phi(mn) = \phi(m)\phi(n)$ whenever $m$ and $n$ are relatively prime.
\end{proof}

\begin{exmp}
    There are $65028096$ separable polynomials in $\Z/120[x]$ of degree at most three. There are $1888$ separable polynomials of degree exactly two in $\Z/15[x]$. \end{exmp}

    \bibliographystyle{alpha}
    \bibliography{biblio.bib}

\end{document}